 \newcommand{\RR}{\mathbf{R}}  
\def\begfig {
\begin{figure}
\small }
\def\endfig {
\normalsize
\end{figure}
}
    \newtheorem{theorem}    {Theorem}       [section]
    \newtheorem{lemma}      [theorem]       {Lemma}
    \newtheorem*{theorem*}{Theorem}
    \theoremstyle{definition}
    \newtheorem{definition}  [theorem] {Definition}
    \theoremstyle{definition}
    \newtheorem{remark}   [theorem]       {Remark}
    \theoremstyle{definition}
    \newtheorem{remarks} [theorem]       {Remarks}
 \renewcommand{\SS}{\mathbf{S}}  
\begin{document}

 \renewcommand{\SS}{\mathbf{S}}  

\renewcommand{\thesubsection}{\thetheorem}

\title{Axial minimal surfaces in ${\SS}^2\times\RR$ are helicoidal}
\begin{abstract}
We prove that if a complete, properly embedded, finite-topology
minimal surface in $\SS^2\times \RR$ contains a line, then its ends
are asymptotic to helicoids, and that if the surface is an annulus, 
it must be a helicoid.
\end{abstract}
\keywords{complete embedded minimal surface, helicoid}
\subjclass[2000]{Primary: 53A10; Secondary: 49Q05, 53C42}
\author{David Hoffman}
\address{Department of Mathematics\\ Stanford University\\ Stanford, CA 94305}
\email{hoffman@math.stanford.edu}
\author{Brian White}
\thanks{The research of the second author was supported by the NSF
  under grant~DMS~0707126. }
\email{white@math.stanford.edu}
\date{October 16, 2009.}

\maketitle
\section{Introduction}
There is a rich theory of complete properly embedded minimal surfaces
of finite topology in $\RR^3$.    In particular, we now have a good understanding of the ends
of such surfaces:  aside from the plane, every such surface
either has one end, in which it case it is asymptotic to a helicoid~\cite{BernsteinBreiner2008}, or it has more than one end,
in which case each end is asymptotic to a plane or to a catenoid~\cite{Collin1997}, 
  \cite{HoffmanMeeks1989}, \cite{MeeksRosenberg1993}. 
  For the rest of this introduction, let us use ``minimal surface" to mean ``complete, properly embedded
minimal surface with finite topology". (Colding and Minicozzi  \cite{ColdingMinicozzi2008} have proved that every  complete embedded minimal surface with finite topology  in $\RR^3$ is properly embedded, so the assumption of properness is not necessary.)

It is interesting to try to classify the ends of minimal surfaces
in homogeneous  $3$-manifolds other than $\RR^3$.  This paper deals
with the ambient manifold $\SS^2\times\RR$.  
(The fundamental paper on minimal surfaces in $\SS^2\times\RR$ is Rosenberg \cite{Rosenberg2002}. 
The survey \cite{Rosenberg2003} is a good introduction to this paper 
as well as to the papers of \cite{MeeksRosenberg2005}, \cite{Hauswirth2006} and 
         \cite{PedrosaRitore}  mentioned below.)  In that case, the only compact minimal surfaces
are  horizontal $2$-spheres.  Any noncompact example has exactly two ends, both annular, one going up
and one going down.  Therefore the genus-zero, noncompact minimal surfaces in $\SS^2\times\RR$ 
are all annuli.  The minimal annuli that are foliated by horizontal circles have been classified by Hauswirth \cite{Hauswirth2006}. 
They form a two-parameter family that contains
on its boundary the helicoids (defined in Section~\ref{helicoids}) and the  
unduloids constructed by Pedrosa and Ritore \cite{PedrosaRitore}.
There are no other known minimal annuli.

 These facts suggest the following two questions posed by Rosenberg:
 \begin{enumerate}
\item Is every minimal annulus in $\SS^2\times \RR$ one of the known examples? That is, is every minimal annulus fibered by horizontal circles?
\item 
If so, must each end of any minimal surface in $\SS^2\times \RR$ be asymptotic to one of the known 
minimal annuli?
\end{enumerate}
In this paper, we show that the answer to both questions is ``yes" in case the surface
is an axial surface, i.e., in case the surface contains a vertical line. 
In particular, the axial minimal annuli in $\SS^2\times \RR$ are precisely the helicoids.

The assumption that the surface contain a line is a very strong one, but there are many minimal
surfaces that have that property.  Indeed, in \cite{HoffmanWhiteS2xR}  we prove existence of  axial examples of
every genus $g$ and every vertical flux. (See also \cite{HoffmanWhite2008}.)  
By  Theorem~\ref{maintheorem} below, those examples
are all asymptotic to helicoids, so we call them ``genus-$g$ helicoids".

Combining the results of that paper with Theorem~\ref{maintheorem}, we have:

\begin{theorem}\label{genus-g}
For every helicoid $H$ of finite pitch in $\SS^2\times \RR$ and for every genus $g>0$, there are at least two
genus-$g$ properly embedded, axial minimal surfaces whose ends are, after suitable rotations, asymptotic to $H$.
The two surfaces  are not congruent to each other by any orientation-preserving isometry of $\SS^2\times \RR$.
If $g$ is even, they are not congruent to each other by any isometry of $\SS^2\times \RR$.
\end{theorem}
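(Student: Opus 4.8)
The plan is to read Theorem~\ref{genus-g} as the combination of two inputs: the construction in \cite{HoffmanWhiteS2xR} supplies the surfaces, and Theorem~\ref{maintheorem} of this paper identifies their ends. The first thing I would pin down is the correct matching parameter. A helicoid $H$ of finite pitch carries a well-defined vertical flux, which determines and is determined by its pitch, and two helicoids agree up to a rotation about the common axis (and a vertical translation) exactly when their pitches, equivalently their vertical fluxes, coincide. So for an axial minimal surface the requirement that its ends be asymptotic to $H$ after a suitable rotation is, at bottom, a condition on the vertical flux of the surface.

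With that in hand I would invoke \cite{HoffmanWhiteS2xR}: for every genus $g>0$ and every prescribed vertical flux it constructs properly embedded axial minimal surfaces of genus $g$ in $\SS^2\times\RR$ with that flux, and for each $g$ it yields at least two of them. I would set the flux equal to that of $H$. Each resulting surface is axial and of finite topology, so Theorem~\ref{maintheorem} applies and tells us that its two ends are asymptotic to helicoids. To finish the asymptotic claim I would identify these limiting helicoids with $H$: the limiting pitch at each end is governed by the prescribed vertical flux, hence equals the pitch of $H$, so after a suitable rotation about the axis each end is asymptotic to $H$. This gives the existence-and-asymptotics half of the theorem for each of the two surfaces.

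The non-congruence assertions are where the real work lies, and I would base them on a chirality invariant that is available precisely because the ends are now known to be helicoidal. An orientation-preserving isometry of $\SS^2\times\RR$ preserves the screw sense of an asymptotic helicoid and the handedness it induces on the handle configuration; the two surfaces in a pair are arranged to carry opposite data of this kind, so no orientation-preserving isometry can carry one to the other. To handle \emph{all} isometries when $g$ is even, the plan is to follow how an orientation-reversing isometry acts on the $g$ handles relative to the symmetry group shared by the construction (generated by a vertical screw motion and $180^{\circ}$ rotations about horizontal geodesics): such an isometry reverses the screw sense and induces a permutation of the handles whose compatibility with the symmetry group is controlled by the parity of $g$. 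For odd $g$ a mirror isometry can be realized, so only the orientation-preserving statement survives; for even $g$ the parity obstruction persists and no isometry interchanges the pair.

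The hard part will be exactly this parity dichotomy. The existence statement and the reduction of ``asymptotic to $H$'' to a flux computation are soft once the flux bookkeeping and Theorem~\ref{maintheorem} are granted; but separating the two surfaces up to all isometries for even $g$, while conceding a mirror congruence for odd $g$, demands the explicit symmetry group of the constructed surfaces together with a careful count of how candidate isometries act on homology and on handle positions. That count is the substance supplied by \cite{HoffmanWhiteS2xR}; the contribution of Theorem~\ref{maintheorem} is to guarantee the helicoidal ends, which is what makes the distinguishing invariants well defined in the first place.
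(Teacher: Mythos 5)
Your overall architecture coincides with the paper's: Theorem~\ref{genus-g} is presented there with no separate proof at all, being stated as an immediate combination of the existence and non-congruence results of \cite{HoffmanWhiteS2xR} with Theorem~\ref{maintheorem}, and like you the paper delegates the entire non-congruence count to the companion paper. So the skeleton of your proposal is the intended one.

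However, your flux bookkeeping contains an error that the paper itself explicitly warns about. You claim that two helicoids with common axes agree up to a rotation (and vertical translation) exactly when their vertical fluxes coincide. This is false: the helicoid $H(\kappa)$ and its mirror image $H(-\kappa)$ have the same vertical flux, namely $\frac{2\pi r}{\sqrt{1+\kappa^2}}$, yet they are not congruent by any rotation about the axes. This is precisely the content of the remark following the proof of Theorem~\ref{maintheorem}, and it is the reason the paper works with the \emph{rotational} flux
\[
   \int_{\partial M_a} \left(\nu \cdot \frac{\partial}{\partial\theta}\right) ds,
\]
which is strictly monotone in $\kappa$ and hence does separate $H(\kappa)$ from $H(-\kappa)$. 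Consequently, matching the vertical flux of the constructed surfaces to that of $H$ only gives ends asymptotic to $H$ \emph{or to its mirror image}; pinning down $H$ itself requires either the handedness built into the construction of \cite{HoffmanWhiteS2xR} or a rotational-flux argument, and one cannot simply reflect an offending surface without risking the loss of the orientation-preserving non-congruence of the pair. Relatedly, your proposed chirality invariant for distinguishing the two surfaces is inconsistent with the very statement you are proving: if the two surfaces in a pair carried asymptotic helicoids of opposite screw sense, they could not both be asymptotic to the same $H$ after rotations. Whatever distinguishes the two examples must be invisible in the asymptotics (roughly, how the handles sit at the central level), which is exactly why that distinction cannot be extracted from Theorem~\ref{maintheorem} and must come from the companion construction itself.
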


The totally geodesic cylinder $\SS^1\times \RR$ may be thought of as a helicoid of infinite pitch.
In this case, the methods of~\cite{HoffmanWhiteS2xR} still produce two examples for each genus, 
but the proof that the two examples are not congruent breaks down.
Earlier, by a different method, 
Rosenberg \cite{Rosenberg2002} explicitly constructed, for each $g$, 
an axial, genus-$g$ minimal surface asymptotic 
to a cylinder.

\stepcounter{theorem}
\subsection {Helicoids} \label{helicoids}


Let $O$ and $O^*$ be a pair of antipodal points in $\SS^2\times\{0\}$ and let $Z$ and $Z^*$ be
vertical lines passing through those points. 
Let $\sigma_{\alpha, v}$ denote the screw motion of $\SS^2\times \RR$ consisting of rotation through angle $\alpha$
about the axes $Z$ and $Z^*$ followed by vertical translation by $v$.
A {\em helicoid} with axes $Z$ and $Z^*$ is a surface of the form
\[
  \bigcup_{z\in \RR} \sigma_{\kappa z, z} \, X
\]
where $X$ is a horizontal great circle that intersects $Z$ and $Z^*$.
The {\em pitch} of the helicoid is $2\pi/\kappa$, and it equals twice the vertical distance between successive sheets of the surface. Unlike the situation in $\RR^3$, helicoids of different pitch do not differ by a homothety of $\SS^2\times \RR$; there are no such homotheties.  Note that a cylinder is a helicoid with infinite pitch ($\kappa=0$), and that as  $\kappa\rightarrow \infty$  the helicoids associated with $\kappa$  converge to a minimal lamination of $\SS^2\times \RR$ by level spheres  with singular set of convergence equal to the axes $ Z \cup Z^*$.


  The main result of this paper is the following theorem:
  
   \begin{theorem}\label{maintheorem}
 Let $M$ be a properly embedded, axial minimal surface in $\SS^2\times \RR$ with bounded curvature
 and without boundary.
\begin{enumerate} 
\item If $E$ is an annular end of $M$, then $E$ is asymptotic to
  a helicoid;
\item If $M$ is an annulus, then $M$ is a helicoid;
\item  If $M$ has finite topology, then each of its two ends is asymptotic to a helicoid,
   and the two helicoids are congruent to each other by  a rotation.
\end{enumerate}
\end{theorem}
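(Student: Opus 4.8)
The plan is to treat the three assertions together: a single vertical-translation analysis yields the asymptotic statement (1), and then (2) and (3) follow from (1) by, respectively, a global rigidity argument and a symmetry argument. First I would record the symmetry forced by the hypothesis. Since $M$ contains the vertical geodesic $Z=\{O\}\times\RR$ and is smooth along it, the Schwarz reflection principle applies to the order-two isometry $\sigma_{\pi,0}$, which rotates $\SS^2$ by $\pi$ about the axis $OO^*$ and fixes both $Z$ and $Z^*$ pointwise. This isometry preserves the tangent plane of $M$ along $Z$, so by unique continuation $\sigma_{\pi,0}(M)=M$. Thus $M$ carries this involution, and it is this symmetry that will eventually produce the rotation relating the two ends in part (3).

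The core of the argument is the asymptotic analysis that establishes (1). Let $E$ be an annular end and translate it downward, setting $E_t=\sigma_{0,-t}(E)$. Because $M$ has bounded curvature and contains $Z$, each $E_t$ contains $Z$ and the family $\{E_t\}$ has uniformly bounded geometry, so along any sequence $t_i\to\infty$ a subsequence converges smoothly on compact sets to a complete minimal limit $E_\infty$ containing $Z$. I would then argue in three steps: (i) show that every such limit is foliated by horizontal great circles meeting $Z$ and $Z^*$, the heuristic being that the horizontal slices of an annular end wind monotonically around and straighten in the limit to great circles through $O$ and $O^*$; (ii) identify $E_\infty$, using this foliation together with invariance under the limiting screw-motion symmetry and Hauswirth's classification of minimal annuli foliated by horizontal circles, as a helicoid; and (iii) upgrade subsequential convergence to genuine asymptoticity, ruling out degeneration to the singular lamination by level spheres (to which helicoids converge as $\kappa\to\infty$) and showing the limit is independent of the subsequence, so that the pitch is well defined and $E$ is asymptotic to a single helicoid.

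For (2), with $M$ an annulus, I would apply the analysis of (1) to both ends and then prove global rigidity. Let $K_\kappa$ be the infinitesimal generator of the screw-motion group $\{\sigma_{\kappa s,s}\}_{s\in\RR}$; the Jacobi field $u_\kappa=\langle\nu,K_\kappa\rangle$, where $\nu$ is the unit normal, detects screw invariance, vanishing identically precisely when $M$ is the corresponding helicoid. Using the asymptotic control at both ends, the $\sigma_{\pi,0}$-symmetry (which forces $u_\kappa$ to vanish along $Z$), and the maximum-principle and unique-continuation properties of Jacobi fields on the annulus, I would show $u_\kappa\equiv 0$ for the common asymptotic pitch; equivalently, that $M$ is foliated by horizontal circles, whence Hauswirth's theorem and axiality identify $M$ as a helicoid. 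Finally (3) follows by applying (1) to each of the two ends of a finite-topology $M$, both of which inherit the axes $Z$ and $Z^*$, and using $\sigma_{\pi,0}$ together with the balancing of flux between the two ends to conclude that the two limiting helicoids share axes and pitch and hence differ by a rotation $\sigma_{\alpha,0}$.

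The main obstacle is step (iii) of the asymptotic analysis: proving that the vertical-translation limits are genuine helicoids of well-defined finite pitch, rather than degenerating to the singular lamination by level spheres, and that the limit does not depend on the chosen subsequence. This is where bounded curvature, the annular (finite-topology) structure of the end, and the presence of the line $Z$ must be combined quantitatively, and I expect the same quantitative control to be the crux of the vanishing of $u_\kappa$ in the rigidity argument of part (2).
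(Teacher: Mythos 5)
There is a genuine gap, and it sits exactly where you place your ``main obstacle.'' Steps (i) and (iii) of your asymptotic analysis are not details to be filled in later; they are the theorem. You give no mechanism for why a vertical-translation limit of $E$ should be foliated by horizontal great circles (your step (i) is offered only as a heuristic, and it is essentially equivalent to what must be proved), and you give no mechanism for why the subsequential limits should agree or why convergence should upgrade to genuine asymptoticity (step (iii)). The paper supplies the missing idea in the form of a convexity lemma (Lemma~\ref{alphabeta}): for a component $D$ of $E\setminus(Z\cup Z^*)$, the slicewise maximum $\alpha(z)$ and minimum $\beta(z)$ of the angle function $\theta$ are respectively convex and concave in $z$, strictly so unless $E$ lies in a helicoid; this is proved by touching $D$ from one side with half-helicoids $\theta=\kappa z+b$ and invoking the maximum principle. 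With that lemma everything collapses: translating by heights $z_n$ realizing $\limsup_z\phi(z)$ (where $\phi=\alpha-\beta$) produces a limit whose $\phi^*$ has an interior maximum, hence is not strictly convex, hence the limit is helicoidal and $\phi^*\equiv 0$, so $\limsup\phi=0$; then convexity of $\alpha$ and concavity of $\beta$ force both graphs onto a \emph{unique} line $\theta=\kappa z+b$ to which they are asymptotic. That last sentence is precisely your step (iii), obtained not from compactness but from the global convexity structure; without some such monotone quantity, subsequential limits alone cannot be assembled into a single asymptotic helicoid.

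Your arguments for (2) and (3) also do not close. For (2): $u_\kappa=\langle\nu,K_\kappa\rangle$ vanishes along $Z$ automatically, since on the axis the screw Killing field reduces to $\partial/\partial z$, which is tangent to $M$; so this vanishing carries no information. Moreover the Jacobi operator $\Delta+|A|^2+\operatorname{Ric}(\nu,\nu)$ has a zeroth-order term of the wrong sign for a maximum principle, and a Jacobi field can vanish along a curve without vanishing identically (on a helicoid, the pure rotational field $\langle\nu,\partial/\partial\theta\rangle$ vanishes on the axes but not elsewhere), so ``maximum-principle and unique-continuation properties'' do not yield $u_\kappa\equiv 0$. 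The paper's route to (2) is immediate from the same convexity lemma: $\phi$ is convex on all of $\RR$, nonnegative, and tends to $0$ at both ends by part (1), hence $\phi\equiv 0$ and $M$ is a helicoid. For (3), ``balancing of flux'' requires choosing the right Killing field: the helicoid $H(\kappa)$ and its mirror image $H(-\kappa)$ have equal vertical flux, so vertical flux cannot exclude the two ends being asymptotic to mirror-image helicoids. The paper uses instead the rotational flux, i.e.\ the flux of $\partial/\partial\theta$, and proves the map $\kappa\mapsto F(\kappa)$ is strictly increasing, which is what forces the two asymptotic helicoids to have the same $\kappa$ and hence to differ by a rotation.
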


\begin{remarks}  Meeks and Rosenberg \cite{MeeksRosenberg2005} proved that  a 
properly embedded minimal surface with finite topology in $\SS^2 \times \RR$  has bounded curvature. 
Thus our assumption that the surfaces we consider have bounded curvature is always satisfied.

In statement~(1), it is not necessary that $E$ be part of a complete surface without boundary.
The statement is true (with essentially the same proof) for any 
properly embedded annulus $E\subset \SS^2\times [z_0,\infty)$ such
that $\partial E\subset \partial  \SS^2\times \{z_0\}$ and such that $E$ contains a vertical ray.

We do not know whether the two helicoids referred to in statement~(3) must be the same.  
See the discussion in Remark~\ref{displaced-ends} below.
\end{remarks}

We would like to thank Harold Rosenberg
 for helpful discusions.

\section{A convexity lemma}
\stepcounter{theorem}
\subsection{Axial surfaces are symmetric and have two axes}  
Suppose that $M$ is a properly embedded, axial minimal surface in $\SS^2\times \RR$.
Then $M$ contains a vertical line~$Z$.  We claim that $M$ must also contain the antipodal
line $Z^*$, i.e., the line consisting of all points at distance $\pi r$ from $Z$, where $r$ is the radius
of the $\SS^2$.  To see this,
let $\rho_Z: \SS^2\times \RR\to \SS^2\times \RR$ denote rotation by $\pi$ about $Z$.
By Schwarz reflection, $\rho_Z$ induces an
orientation-reversing isometry of $M$.  In particular, $\rho_Z$ interchanges the
two components of the complement of $M$.  Thus no point in the complement of 
$M$ is fixed by $\rho_Z$, so the fixed points of $\rho_Z$ must all lie in $M$.
The fixed point set of $\rho_Z$ is precisely $Z\cup Z^*$,  so $Z^*$ must lie in $M$, as claimed.

\stepcounter{theorem} 
\subsection{The angle function $\theta$}
We will assume from now on that an axial surface in $\SS ^2\times \RR$ has  axes  $Z$ and $Z^*$ that pass through
a fixed pair $O$ and $O^*$ of antipodal points in $\SS^2=\SS^2\times \{0\}$.  
Fix a stereographic projection from  $(\SS^2\times\{0\}) \setminus \{O^*\}$ to $\RR^2$, 
and let $\theta$ be the angle function on $(\SS^2\times\{0\}) \setminus \{O, O^*\}$ corresponding to the polar coordinate $\theta$ on $\RR^2$.   Extend $\theta$ to all of $(\SS^2\times \RR)\setminus (Z\cup Z^*)$ by requiring
that it be invariant under vertical translations.  Of course $\theta$ is only well-defined up 
to integer multiples of $2\pi$.


If $H$ is a helicoid with axes $Z$ and $Z^*$, we will call the components of $H\setminus(Z\cup Z^*)$
{\em half-helicoids}.   The half-helicoids are precisely the surfaces given by
\[
   \theta = \kappa\,  z + b.
\]
Here $2\pi/ \kappa$ is the pitch of the helicoid.  Rotating $H$ by an angle $\beta$ changes the corresponding
$b$ to $b+\beta$.  
Note that the entire helicoid $H$ consists of $Z\cup Z^*$ (where $\theta$ is not defined) together with all
points not in $Z\cup Z^*$ such that
\[
  \text{$\theta \cong \kappa\, z + b$  (mod  $\pi$)}.
\]

\subsection{ The restriction of $\theta$ to an annular slice}
Let $I\subset \RR$ be a closed interval (possibly infinite) and let
\[
   E = M \cap (\SS^2\times I)
\]
be the portion of $M$ in $\SS^2\times I$.  Suppose that $E$ is an annulus.
Then $E \setminus(Z\cup Z^*)$ consists of two simply connected domains that are congruent
by the involution $\rho_Z$.  Denote by $D$ one of these domains, and consider the restriction
 of $\theta$ to $D$.  Because $D$ is simply connected, we may choose a single-valued branch of this function, and we will  also refer to it as $\theta$ when there is no ambiguity.   Note that $\theta$
 extends continuously from $E$ to $\overline{E}$ since $\overline{E}$ has a well-defined
 tangent halfplane at all points of $\overline{E}\setminus E \subset Z\cup Z^*$.

\begin{definition}\label{alpha-beta-phi-def}
\begin{align*}
\alpha(z) &=\max \{ \theta (p,z)\, : \, (p,z)\in \overline{D} \},  \\
\beta(z) &= \min  \{ \theta (p,z)\, : \, (p,z)\in \overline{D} \}, \\
\phi(z) &= \alpha(z)-\beta(z).
\end{align*}
\end{definition}

Note that $\alpha(z)=\beta(z)$ if and only if $D\cap (\SS^2\times \{z\})$ is half of a great circle.
Note also that  $E$ is a portion of a helicoid if and only if
\[
  \alpha(z) \equiv \beta(z) \equiv \kappa\, z+b
\]
for some $\kappa$ and $b$. 

\begin{lemma}\label{alphabeta}
 The  functions $\alpha$, $-\beta$,
 and $\phi=\alpha-\beta$ are  convex,
and they are strictly convex unless  $E$ is contained in a helicoid.
\end{lemma}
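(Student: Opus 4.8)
The plan is to recast convexity of $\alpha$ as a maximum principle for the comparison of $M$ with helicoids. The crucial structural fact is that for \emph{every} $\kappa$ the half-helicoids $\theta=\kappa z+b$ are minimal, and as $b$ varies they foliate $(\SS^2\times\RR)\setminus(Z\cup Z^*)$; equivalently, the level sets of $w:=\theta-\kappa z$ are minimal leaves. Fix an interval $[z_0,z_1]\subset I$ and let $L(z)=\kappa z+b$ be the chord of $\alpha$, so $L(z_i)=\alpha(z_i)$. Since $\alpha(z)-L(z)=\max\{w(p,z):(p,z)\in\overline D\}-b$, we have $\max_{z\in[z_0,z_1]}(\alpha-L)=\max_{\overline D'}w-b$, where $\overline D'=\overline D\cap(\SS^2\times[z_0,z_1])$. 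Thus $\alpha\le L$ on $[z_0,z_1]$---i.e.\ $\alpha$ is convex---exactly when $w$ attains its maximum over the slab $\overline D'$ on the top or bottom slice, where by construction that maximum equals $b$.

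First I would prove this maximum principle. Suppose instead $w$ had maximum $c>b$ on $\overline D'$; since the slice-maxima at $z_0,z_1$ both equal $b<c$, any maximizing point $q=(p^*,z^*)$ has $z^*\in(z_0,z_1)$. At $q$ the surface $D$ touches the minimal leaf $S_c=\{w=c\}$ and lies in the closed region $\{w\le c\}$. The strong maximum principle for minimal surfaces (two minimal surfaces tangent at a common point, one locally on one side of the other, must coincide) then forces $D\subset S_c$, so $E$ lies in a helicoid of pitch $2\pi/\kappa$ and $\alpha(z)\equiv\kappa z+c$ is affine. But then $\alpha(z_0)=\kappa z_0+b$ gives $c=b$, a contradiction. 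Hence $\alpha$ is convex. Convexity of $-\beta$ follows by running the same argument with $\theta$ replaced by $-\theta$ (an orientation-reversing isometry of $\SS^2$ sends helicoids to helicoids and interchanges $\max$ and $\min$), and $\phi=\alpha+(-\beta)$ is convex as a sum of convex functions.

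For strict convexity, suppose $\alpha$ is convex but affine on some $[z_0,z_1]$, say $\alpha(z)=\kappa z+b$ there. Then $\max\{w(p,z):(p,z)\in\overline D\}=b$ for every $z\in[z_0,z_1]$, so $w\le b$ on $\overline D'$ with equality attained at interior-height points; the same maximum-principle argument forces $E$ into a helicoid. By contraposition, if $E$ is not contained in a helicoid then $\alpha$ and $-\beta$ are each strictly convex, and hence so is $\phi$. The main obstacle is the behavior at the axes: when a maximizing or contact point $q$ lies on $Z\cup Z^*$, the foliation degenerates (all leaves pass through the axes) and $\theta$ is undefined there, so the interior strong maximum principle does not apply directly. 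I expect to resolve this by Schwarz reflection: since $\rho_Z$ is an isometry of both $M$ and the full helicoid, both are smooth minimal surfaces across $Z\cup Z^*$, and matching tangent halfplanes at $q$ (which is exactly the condition $w(q)=c$) together with one-sidedness reduces the axis case to the interior maximum principle; alternatively one may use a Hopf boundary-point argument for the two surfaces meeting tangentially along the axis. Checking tangency and local one-sidedness at the axis is the delicate point of the proof.
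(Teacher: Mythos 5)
Your proposal is correct and is essentially the paper's own argument: both compare $D$ with the one-parameter family of half-helicoids $\theta=\kappa z+b$ having the slope of the chord (your level sets of $w=\theta-\kappa z$ are exactly this family), locate a touching point at an interior height, and apply the strong maximum principle---interior, or boundary at the axes---together with continuation to force $D$ into a helicoid. The only differences are organizational: you run the argument twice (once for convexity, once to rule out affine pieces), whereas the paper starts from ``not strictly convex'' and handles both at once by shifting the line to the smallest $b$ with $\kappa z+b\ge\alpha$; and the delicate axis case you flag is treated in the paper at the same level of detail, simply by invoking the boundary maximum principle at points of $\overline{D}\cap(Z\cup Z^*)$, where $\theta$ extends continuously because of the well-defined tangent halfplane.
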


\begin{proof}
Suppose that $\alpha$ is not  strictly convex. Then there exists $z_0<z_1<z_2$ 
such that 
\[ 
       \alpha (z_1)\geq \lambda \alpha (z_0) +(1-\lambda) \alpha(z_2),
\]
where $z_1 =\lambda z_0+ (1-\lambda) z_2$ and  $0<\lambda<1$.
 Let 
 \[    
      \kappa = \frac{\alpha (z_2)-\alpha (z_0)}{z_2-z_0},
\]
 and choose
$b$ to be the smallest value so that $\kappa \, z+b\geq \alpha(z)$ for all $z$ in $[z_0, z_2]$. 
 It follows that there is a value of
  $z$, say $z^*$, in the interior of the interval $[z_0,z_2]$  for which 
               $\alpha(z^*) =  \kappa \,  z^*+b$.  
 Let $p$ be a point in $\overline{D}\cap (\SS^2\times\{z^*\})$ with
 \[
   \theta(p) = \alpha(z^*).
\]
Then in a neighborhood of $p$, the surface $D$ lies on one side of the half-helicoid $H$ given by 
        $\theta = \kappa \,  z+b$, and the two surfaces
touch at $p$.  By the maximum principle
     (or the boundary maximum principle if $p$ is boundary point of $D$) together with analytic continuation,            
     $D\subset H$.  
 
 The  statements about convexity and strict convexity of $-\beta$ (or, equivalently, about concavity
 and strictly concavity of $\beta$) are proved in exactly the same way.
 
 The assertions about $\alpha-\beta$ follow, since the sum of two convex functions is convex, 
 and the sum is strictly convex if either summand is strictly convex.
\end{proof}


\section{The proof of Theorem~\ref{maintheorem}}

Consider first an annular end $E$.  We may suppose that $E$ is properly embedded in 
   $\SS^2\times [a,\infty)$.
 Choose $z_n\to \infty$ such that
  \[
            c:= \limsup_{z\to \infty} \phi(z)= \lim_{z_n\rightarrow \infty}\phi(z_n)
  \]
  where $\phi=\alpha-\beta$ is as in Definition~\ref{alpha-beta-phi-def}.
  Let $E_n$ and $D_n$ be the result of translating $E$ and $D$ downard by $z_n$.
 Since we are assuming that the curvature of $E$ is bounded, we may assume by passing
 to a subsequence that the $E_n$ converge smoothly to a properly embedded minimal annulus $E^*$,
 and that the $D_n$ converge smoothly to $D^*$, one
  of the connected components of $E^*\setminus (Z\cup Z^*)$.
  The smooth convergence $D_n\to D^*$ implies that the functions $\phi_n(z)=\phi(z-z_n)$  converge smoothly to 
  the corresponding angle difference function $\phi^*$ coming from $D^*$.
  Thus $\phi^*(z)$ attains its maximum value of $c$ at $z=0$.
  Consequently, $\phi^*$ is not strictly convex, so by Lemma~\ref{alphabeta},  $D^*$ is contained in a helicoid, and therefore
$\phi^*\equiv 0$.  In particular, $c=0$.

Returning our attention to the original surface $D$, we have $\phi =\alpha -\beta >0$, and since $\alpha$ is convex and $\beta$ is concave, there is a line that lies in the region below the graph of $\alpha$ 
and above the graph of $\beta$. 
Since $\alpha(z)-\beta(z)\to c = 0$ as $z\to\infty$, there is 
a unique line, say  the graph of $\theta = \kappa \, z + b$, that lies between the graphs of $\alpha$ and $\beta$,
and these graphs are asymptotic to this line.  Thus $D$ is $C^0$-asymptotic to the half-helicoid  whose equation is
     $\theta= \kappa \, z + b$. 
Since the curvature of $D$ is bounded, the surface $D$ is smoothly asymptotic to that half-helicoid.  
It follows immediately that the end 
\[
   E = \overline{D} \cup \rho_Z \overline{D}
\]
is asymptotic to the corresponding helicoid. This proves statement~(1) of Theorem~\ref{maintheorem}.

To prove statement~(2), suppose that $M$ is a properly embedded, axial minimal annulus.
Let $D$ be one of the simply connected components of $M\setminus (Z\cup Z^*)$.  We know from
Lemma~\ref{alphabeta} that $\phi$ is convex on all of $\RR$, and from the proof above of the first statement  of Theorem~\ref{maintheorem} (applied to the ends of $M$) that  
\[
  \lim_{z\to \pm \infty} \phi(z) = 0.
\]
Thus $\phi(z)\equiv 0$, so by Lemma~\ref{alphabeta}, $M$ is a helicoid.

Statement~(3) of Theorem~\ref{maintheorem} follows from a standard flux argument
as follows.
Let $s<t$ and let 
\[
  \Sigma=\Sigma(s,t)=M\cap (\SS^2\times (s,t)).
\]
Let $\nu(p)$ be the outward unit normal at
$p\in \partial \Sigma$. 
Since $\partial/\partial \theta$ is a Killing field on $\SS^2\times \RR$,
\[
  \int_{\partial \Sigma} \left( \nu\cdot \frac{\partial}{\partial\theta} \right)\, ds = 0
\]
by the first variation formula.
Equivalently, if we let $M_a = M\cap \{z\le a\}$, then the flux
\[
   \int_{\partial M_a} \left(\nu \cdot  \frac{\partial}{\partial\theta}\right)\,ds \tag{*}
\]
is independent of $a$.  We call \thetag{*} the {\em rotational flux} 
of $M$ (with respect to the axes $Z$ and $Z^*$).

If $M$ is asymptotic (as $z\to \infty$ or as $z\to -\infty$) to a helicoid $H$, then $M$ and $H$
clearly have the same rotational flux.   Thus to prove statement~(3), it suffices to check that if two helicoids with axes
$Z \cup Z^*$ have the same rotational flux, then they are congruent by rotation.
If we let $F(\kappa)$ denote the rotational flux of the helicoid $H(\kappa)$ given by
\[
        \text{ $ \theta \cong  \kappa \, z$ (mod $\pi$),}
  \]
then it suffices to show that $F(\kappa)$ depends strictly monotonically on $\kappa$. 
To see it does, note that in the expression
\[
  F(\kappa)=  \int_{ \partial (H(\kappa)\cap \{z\le 0\}) } \left(\nu \cdot  \frac{\partial}{\partial \theta} \right) \, ds,
\]
the integrand at each point is a strictly increasing function of $\kappa$ (except at the two points 
$O$ and $O^*$), and thus that $F(\kappa)$ is a strictly increasing function of $\kappa$.
 (At each point of 
  $\SS^2\times \{0\}$ other than $O$ and $O^*$, the larger $\kappa$ is, the smaller the angle between the vectors
  $\partial / \partial \theta$ and $\nu$.) \qed

\begin{remark}
The reader may wonder why we used rotational flux rather than the vertical flux
\[
    \int_{\partial M_a} \left( \nu \cdot \frac{\partial}{\partial z} \right) \, ds.
\]
The problem  with vertical flux is that the  helicoid $H(\kappa)$ and its mirror image $H(-\kappa)$ 
 have the same
vertical flux (equal to $\frac{2\pi r}{\sqrt{1+\kappa^2}}$).   Thus vertical flux alone does not rule out the possibility that the two 
ends of $M$ might be asymptotic to helicoids that are mirror images of each other.
\end{remark}
 

\begin{remark}\label{displaced-ends}
We have not proved that the constant terms $b$ in the equations 
\[
          \theta \cong \kappa \,  z + b \quad\text{(mod $\pi$)}
\]
for the helicoids asymptotic to the ends of $M$ are the same. 
There is some reason to expect that $b$ can change from end to end.

 A change in $b$ corresponds to a rotation, and  when $\kappa \neq 0$ (i.e. when the helicoid is not a cylinder) a rotation by $\beta$ is equivalent to a translation by $\beta/ \kappa$.    In the Introduction, we discussed known examples of properly embedded axial minimal surfaces of finite genus.  Those examples may be regarded as
desingularizing the intersection of a helicoid $H$ with the totally geodesic sphere $\SS^2\times\{0\}$. 
Such desingularization might well cause a slight vertical separation  of the top and bottom ends of the helicoid, in order to ``make room'' for the sphere. A similar situation exists in $\RR ^3$ when considering the Costa-Hoffman-Meeks surfaces as desingularizations of the intersection of a vertical catenoid with a horizontal plane passing through the waist of the helicoid \cite{HoffmanMeeks1990}, \cite{HoffmanKarcher1997}. 
While the top and bottom catenoidal ends have the same logarithmic growth rate, corresponding to the vertical flux,  numerical evidence from computer simulation of these surfaces indicates a vertical separation of the those ends.  
 (In other words, the top end is asymptotic to the top of a catenoid, the bottom end is asymptotic to the bottom of a catenoid, and numerical evidence indicates that the two catenoids are related by a nonzero vertical translation.)
\end{remark}

\newcommand{\hide}[1]{}

\begin{bibdiv}

\begin{biblist}

\bib{BernsteinBreiner2008}{article}{
  author={Bernstein, Jacob},
  author={Breiner, Christine},
  title={Conformal Structure of Minimal Surfaces with Finite Topology},
  eprint={arXiv:0810.4478v1 [math.DG]},
  date={2008},
}  

\bib{ColdingMinicozzi2008}{article}{
   author={Colding, Tobias H.},
   author={Minicozzi, William P., II},
   title={The Calabi-Yau conjectures for embedded surfaces},
   journal={Ann. of Math. (2)},
   volume={167},
   date={2008},
   number={1},
   pages={211--243},
   issn={0003-486X},
   review={\MR{2373154 (2008k:53014)}},
}

\bib{Collin1997}{article}{
   author={Collin, Pascal},
   title={Topologie et courbure des surfaces minimales proprement plong\'ees
   de $\bold R\sp 3$},
   language={French},
   journal={Ann. of Math. (2)},
   volume={145},
   date={1997},
   number={1},
   pages={1--31},
   issn={0003-486X},
   review={\MR{1432035 (98d:53010)}},
}
	
\bib{Hauswirth2006}{article}{
   author={Hauswirth, Laurent},
   title={Minimal surfaces of Riemann type in three-dimensional product
   manifolds},
   journal={Pacific J. Math.},
   volume={224},
   date={2006},
   number={1},
   pages={91--117},
   issn={0030-8730},
   review={\MR{2231653 (2007e:53004)}},
}

\bib{HoffmanKarcher1997}{article}{
   author={Hoffman, David},
   author={Karcher, Hermann},
   title={Complete embedded minimal surfaces of finite total curvature},
   conference={
      title={Geometry, V},
   },
   book={
      series={Encyclopaedia Math. Sci.},
      volume={90},
      publisher={Springer},
      place={Berlin},
   },
   date={1997},
   pages={5--93, 267--272},
   review={\MR{1490038 (98m:53012)}},
}

\bib{HoffmanMeeks1989}{article}{
   author={Hoffman, David},
   author={Meeks, William H., III},
   title={The asymptotic behavior of properly embedded minimal surfaces of
   finite topology},
   journal={J. Amer. Math. Soc.},
   volume={2},
   date={1989},
   number={4},
   pages={667--682},
   issn={0894-0347},
   review={\MR{1002088 (90f:53010)}},
}

\bib{HoffmanMeeks1990}{article}{
   author={Hoffman, David},
   author={Meeks, William H., III},
   title={Limits of minimal surfaces and Scherk's fifth surface},
   journal={Arch. Rational Mech. Anal.},
   volume={111},
   date={1990},
   number={2},
   pages={181--195},
   issn={0003-9527},
   review={\MR{1057654 (92a:53008)}},
}

\bib{HoffmanWhite2008}{article}{
   author={Hoffman, David},
   author={White, Brian},
   title={On the number of minimal surfaces with a given boundary},
   journal={Ast\'erisque},
   volume={322},
   date={2008},
   pages={207--224},
}

\bib{HoffmanWhiteS2xR}{article}{
   author={Hoffman, David},
   author={White, Brian},
   title={Helicoid-like minimal surfaces of arbitrary genus in $\mathbf{S}^2 \times \mathbf{R}$},
   status={in preparation},
   volume={},
   date={2009},
   number={},
   pages={},
   issn={},
}

\bib{MeeksRosenberg1993}{article}{
   author={Meeks, William H., III},
   author={Rosenberg, Harold},
   title={The geometry and conformal structure of properly embedded minimal
   surfaces of finite topology in ${\bf R}\sp 3$},
   journal={Invent. Math.},
   volume={114},
   date={1993},
   number={3},
   pages={625--639},
   issn={0020-9910},
   review={\MR{1244914 (94i:53003)}},
}

\bib{MeeksRosenberg2005}{article}{
  author={Meeks, William H.},
  author={Rosenberg, Harold},
  title={The theory of minimal surfaces in ${\mathbb {M}}\times \mathbb {R}$},
  journal={Comment. Math. Helv.},
  volume={80},
  date={2005},
  number={4},
  pages={811--858},
  issn={0010-2571},
  review={\MR {2182702},
  Zbl 1085.53049.}
} \hide{ (2006h:53007)}

\bib{PedrosaRitore}{article}{
   author={Pedrosa, Renato H. L.},
   author={Ritor{\'e}, Manuel},
   title={Isoperimetric domains in the Riemannian product of a circle with a
   simply connected space form and applications to free boundary problems},
   journal={Indiana Univ. Math. J.},
   volume={48},
   date={1999},
   number={4},
   pages={1357--1394},
   issn={0022-2518},
   review={\MR{1757077 (2001k:53120)}},
}

\bib{Rosenberg2002}{article}{
  author={Rosenberg, Harold},
  title={Minimal surfaces in ${\mathbb {M}}\sp 2\times \mathbb {R}$},
  journal={Illinois J. Math.},
  volume={46},
  date={2002},
  number={4},
  pages={1177--1195},
  issn={0019-2082},
  review={\MR {1988257},
  Zbl 1036.53008.}
} \hide{ (2004d:53015)}

\bib{Rosenberg2003}{book}{
   author={Rosenberg, Harold},
   title={Some recent developments in the theory of minimal surfaces in
   3-manifolds},
   series={Publica\c c\~oes Matem\'aticas do IMPA. [IMPA Mathematical
   Publications]},
   note={24$\sp {\rm o}$ Col\'oquio Brasileiro de Matem\'atica. [24th
   Brazilian Mathematics Colloquium]},
   publisher={Instituto de Matem\'atica Pura e Aplicada (IMPA), Rio de
   Janeiro},
   date={2003},
   pages={iv+48},
   isbn={85-244-0210-5},
   review={\MR{2028922 (2005b:53015)}},
}

\end{biblist}

\end{bibdiv}		

\end{document}